\documentclass[12pt]{amsart}
\usepackage{amsfonts}
\usepackage{amsmath}
\usepackage{soul}
\usepackage{color}
\usepackage{amssymb}
\usepackage{graphicx}
\setcounter{MaxMatrixCols}{30}
\usepackage[all]{xy}\SelectTips{cm}{}
\definecolor{lightblue}{rgb}{.30,.95,.2}
\sethlcolor{lightblue}
\usepackage{epsf,epsfig}



\newcommand{\alga}{\mathbf A}


\providecommand{\U}[1]{\protect\rule{.1in}{.1in}}
\newtheorem{theorem}{Theorem}
\theoremstyle{plain}

\newtheorem{corollary}{Corollary}

\newtheorem{definition}{Definition}
\newtheorem{example}{Example}

\newtheorem{lemma}{Lemma}

\newtheorem{proposition}{Proposition}

\numberwithin{equation}{section}

\usepackage
[authormarkup=none]
{changes}

\definechangesauthor[color=red]{Ref1}

\begin{document}
\title{Residuated Relational Systems}
\author[Bonzio S.]{S. Bonzio}
\address{Stefano Bonzio, University of Cagliari\\
Italy}
\email{stefano.bonzio@gmail.com}

%
%
\author[Chajda I.]{I. Chajda}
\address{Ivan Chajda, Palack\'y University Olomouc\\
Czech Republic}
\email{ivan.chajda@upol.cz}

\begin{abstract}

The aim of the present paper is to generalize the concept of residuated poset, by replacing the usual partial ordering by a generic binary relation, giving rise to relational systems which are residuated. In particular, we modify the definition of adjointness in such a way that the ordering relation can be harmlessly replaced by a binary relation. By enriching such binary relation with additional properties we get interesting properties of residuated relational systems which are analogical to those of residuated posets and lattices.
\end{abstract}
\thanks{{\bf Corresponding author}: Stefano Bonzio,  stefano.bonzio@gmail.com}
\keywords{Relational system, residuated system, binary relation, directoid, reflexivity, transitivity. \\
MSC classification: 08A02, 06A11, 06B75.}
\maketitle

\section{Introduction}
The study of binary relations traces back to the work of J. Riguet \cite{Rig48}, while a first attempt to provide an algebraic theory of relational systems is due to Mal'cev \cite{Malcev}. Relational systems of different kinds have been investigated by different authors for a long time, see for example \cite{Belo02}, \cite{Bonzio15}, \cite{ChLa13}, \cite{ChLa14}, \cite{ChLa16}. Binary relational systems are very important for the whole of mathematics, as relations, and thus relational systems, represent a very general framework appropriate for the description of several problems, which can turn out to be useful both in mathematics and in its applications. For these reasons, it is fundamental to study relational systems from a structural point of view. In order to get deeper results meeting possible applications, we claim that the usual domain of binary relations shall be expanded. More specifically, our aim is to study general binary relations on an underlying algebra whose operations interact with them. \\
A motivating idea comes from the concept of \emph{polarity} introduced by Birkhoff, see \cite{Birk61}. In detail, consider a binary relation $R$ on a set $A$ (i.e.\ $R\subseteq A\times A$). For any subset $X\subseteq A$, we can define the sets
$$X^{\ast}= \{ y\in A: (x,y)\in R, \text{ for each  } x\in X\}, $$
$$X^{\dagger}= \{ x\in A: (x,y)\in R, \text{ for each  } y\in X\}. $$
When considering the power set $\mathcal{P}(A)$, we denote by $f$ and $g$ the mappings $f(X)= X^{\ast}$, $g(X)= X^{\dagger}$. \\
Following \cite{Birk61}, we say that the pair $(f,g)$ forms a \emph{polarity}, if, for every $X_1,X_1\subseteq A$, we have $X_1\subseteq g(X_2)$ if and only if $X_2\subseteq f(X_1)$.
We can freely consider two mappings $f,g$ on a non-void set $A$ into itself and a binary relation $R$ on $A$ and say that the pair $(f,g)$ forms a \emph{Galois connection} whenever $$(x,g(y))\in R \text{ if and only if } (f(x),y)\in R,$$
for any $x,y\in A$. \\
In order to pursue the idea of extending the study of binary relations from sets to algebras we define the notion of a Galois connection on an algebra equipped with an additional binary relation. Let $\mathbf{A}=\langle A,\cdot,\rightarrow\rangle$ be an algebra of type $\langle 2,2\rangle$. For a given element $y\in A$, we define the two mappings 
$f_{y}(x)=x\cdot y, \;\;\; g_{y}(x)=y\rightarrow x.$
We say that the pair $(f_y , g_y)$ is a \emph{residuated pair} if it forms a \emph{Galois connection}, i.e. $$(x,y\rightarrow z)\in R  \text{  if and only if  } (x\cdot y, z)\in R.$$
We will show that this approach may enrich the study of binary relations in general on one hand, and, most importantly, the study of residuated structures on the other. Indeed, when the relation $R$ is taken to be a partial ordering, we get a residuated poset, which is an important tool both in pure algebra and in the algebraic studies in logic. \\
Our idea is that it makes sense to study the cases where such relation on the residuated algebra need not be a partial order, but it can be a weaker relation. This motivates us to develop a general setting for residuated structures, which is a step towards a generalization of the theory of (commutative) residuated lattices and of ordered (commutative) residuated monoids. \\


The paper is structured as follows. In Section \ref{sec:3.1} the notion of a residuated relational system is introduced and the basic properties are proved. In Section \ref{sec:3.2} we develop the concept of a pre-ordered residuated system, which is nothing but a residuated relational system whose relation is reflexive and transitive; finally, in Section \ref{sec:3.3}, we expand the notion of a residuated relational system by adding negation.


\section{Residuated relational systems}\label{sec:3.1}
We begin by introducing the central notion that will be used throughout the paper. 

\begin{definition}\label{def: residuated relational systems}
A \emph{residuated relational system} is a structure $ \mathbf{A}=\langle A, \cdot, \rightarrow, 1, R\rangle $, where $ \langle A, \cdot, \rightarrow, 1\rangle $ is an algebra of type $ \langle 2,2,0\rangle $ and $R$ is a binary  relation on $ A $ and satisfying the following properties:   
\begin{itemize}
\item[1)] $ \langle A,\cdot, 1\rangle $ is a commutative monoid; 
\item[2)] $ (x,1)\in R $, for each $ x\in A $; 
\item[3)] $ (x\cdot y,z)\in R $ if and only if $ (x,y\rightarrow z)\in R $.
\end{itemize}
\end{definition}
\noindent
We will refer to the operation $ \cdot $ as multiplication, to $ \rightarrow $ as its \emph{residuum} and to condition 3) as \emph{residuation}. 

\begin{example}\label{ex: POCRIMs}
\emph{Any commutative residuated integral pomonoid (see \cite{BlokRaft97} for details) - pocrim for short - is an example of residuated relational system, where $R$ coincides with a partial order.} 
\end{example}
\begin{example}
\emph{Any (commutative) residuated lattice is a residuated relational system, where $R$ is a partial lattice order.}
\end{example}

Multiplication, as well as its residuum, can be defined as residuated maps on $ A $. More precisely, let $ \mathbf{B}=\langle B,R_1\rangle $ and $ \mathbf{C}=\langle C,R_2 \rangle $ be two relational systems (sets with a binary relation), we say that a map $ f:B\rightarrow C $ is \textit{residuated} if there exists a map $ g:C\rightarrow B $, such  that $ (f(b),c)\in R_2 $ if and only if $ (b,g(c))\in R_1 $. The two maps, $ f $ and $ g $, form a \textit{pair} of residuated maps. Setting $ \mathbf{A}=\mathbf{B}=\mathbf{C} $ and defining for any $ a\in A $, $ f_{a}(x)= x\cdot a $ and $ g_{a}(x)=a\rightarrow x $ we obtain that  the two maps $ f_{a} $ and $ g_{a} $ form a residuated pair. 

It is useful to recall here the notions of upper cone (with respect to a pair of elements) and of supremal element.
\begin{definition}\label{def:upper cone}
For any elements $a,b\in A$, the \emph{upper cone} of $a,b$ is the set
\[
U_{R}(a,b)=\{c\in A:(a,c)\in R\text{ and }(b,c)\in R\}.
\]
\end{definition}
\noindent
It is immediate to notice that in a residuated relational system, it may never be the case that $ U_{R}(a,b)=\emptyset $ for any $ a,b\in A $, as, by condition 2) in Definition \ref{def: residuated relational systems}, $ 1\in U_{R}(a,b) $.  
\begin{definition}\label{def: supremal element}
 An element $ w\in U_{R}(a,b) $ is a \emph{supremal element} for $ a,b $ if for each $ z\in U_{R}(a,b) $, with $ z\neq w $, then $ (w,z)\in R $. 
\end{definition}
\noindent
Obviously, whenever $ R $ is a lattice order relation on $ A $, then the supremal element for $ a,b \in A $ always exists, is unique and coincides with $\sup(a,b) $.  The definition of supremal element can be easily extended to subsets of $ A $. Let $ Z\subseteq A $, an element $ k\in A $ is a \emph{supremal element for Z} if $ (z,k)\in R $, for each $ z\in Z $ and for each $ w\in A $ with $w\neq k$ and $(z,w)\in R$ for all $z\in Z$ we have $(k,w)\in R $. 
In case $ R $ coincides with a partial ordering and sup $Z $ exists then sup $ Z $ is the unique supremal element for $ Z $. Notice that for a residuated relational system equipped with an arbitrary binary relation $ R $, a supremal element for a subset $ Z $ need not exist, and if it does, it need not be unique. 


The basic properties for residuated relational systems are subsumed in the following:

\begin{proposition}\label{prop: aritmetica}
Let $ \mathbf{A}=\langle A, \cdot, \rightarrow, 1, R\rangle $ be a residuated system, then
\begin{itemize}
\item[(a)] If $ x\rightarrow y=1 $ then $ (x,y)\in R $, for all $x,y\in A$.
\item[(b)] $ (x,1\rightarrow 1)\in R $, for each $ x\in A $.
\item[(c)] $(1,x\rightarrow 1)\in R $, for each  $x\in A $.
\item[(d)] If $ x\rightarrow y=1 $ then $(z\cdot x,y)\in R$, for all $x,y,z\in A$.
\item[(e)] $(x, y\to 1)\in R$, for all $x,y\in A$.
\end{itemize}
\end{proposition}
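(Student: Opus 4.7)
The plan is to derive each of (a)--(e) by a single application of the residuation biconditional in condition 3) of Definition~\ref{def: residuated relational systems}, combined with the two elementary facts that $1$ is the multiplicative identity and that $(x,1)\in R$ holds universally by condition 2). No induction, case analysis, or auxiliary construction is needed; the whole statement is essentially a verification that the standard residuated-poset arithmetic around the unit survives when the partial order is replaced by an arbitrary binary relation.

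For part (a), I would instantiate residuation as $(1\cdot x,y)\in R$ iff $(1,x\rightarrow y)\in R$; the left side equals $(x,y)\in R$ by the monoid identity, while the hypothesis $x\rightarrow y=1$ collapses the right side to $(1,1)\in R$, which is a special case of condition 2). Part (d) is almost identical: residuation turns the goal $(z\cdot x,y)\in R$ into $(z,x\rightarrow y)\in R$, which becomes $(z,1)\in R$ by hypothesis, and is therefore immediate from condition 2).

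Parts (b), (c), and (e) run in the reverse direction, starting from an instance of condition 2) on the multiplicative side and pushing it across the biconditional to the residual side. Concretely, $(x\cdot 1,1)\in R$ yields (b), $(1\cdot x,1)\in R$ yields (c), and $(x\cdot y,1)\in R$ yields (e); in each case the left-hand term of $R$ simplifies to an element already known to be $R$-related to $1$ by condition 2). The only nuance worth noting is that (b) and (c) are the same biconditional with the roles of $x$ and $1$ swapped, so even though both ultimately rely on $1$ being the identity, they are genuinely distinct applications of residuation rather than one statement up to rewriting.

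There is no real obstacle; the proof is entirely mechanical and, if anything, the main decision in the write-up is purely expository, namely whether to group (a) with (d) and then (b), (c), (e) together, since within each group the arguments are nearly identical and the systematic use of (3) as a rewriting engine is the only idea in play.
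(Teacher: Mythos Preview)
Your proof is correct in every detail: each of (a)--(e) follows by a single application of residuation together with condition~2) and the monoid identity, exactly as you describe. The paper in fact omits the proof of this proposition entirely, treating it as immediate from the definitions; your write-up supplies precisely the routine verification the authors left implicit, so there is nothing to compare.
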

Recall that a binary relation $ R $ is said to be \emph{antisymmetric} whenever, if $ (x,y)\in R $ and $ (y,x)\in R $ then $ x = y $. 
The next proposition shows that a residuated relational system whose relation is antisymmetric turns into an algebra of type $ \langle 2,2,0\rangle $.
\begin{proposition}\label{prop: definitione equazioanle di R}
Let $ \mathbf{A}=\langle A, \cdot, \rightarrow, 1, R\rangle $ be a residuated relational system, with $ R $ an antisymmetric relation. Then 
\begin{itemize}
\item[i)] $ (x,y)\in R $ if and only if $ x\rightarrow y = 1 $.
\item[ii)] If $ R $ is also reflexive, then $ (x\cdot y, y)\in R $ and $ (x\cdot y,x)\in R $
\end{itemize}
\end{proposition}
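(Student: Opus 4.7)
The plan is to exploit antisymmetry to upgrade the one-directional implication in Proposition~\ref{prop: aritmetica}(a) to a biconditional, and then feed the resulting characterization back into the residuation law together with reflexivity.

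For part (i), the forward direction $x\rightarrow y = 1 \Rightarrow (x,y)\in R$ is already Proposition~\ref{prop: aritmetica}(a), so the content is the converse. Assume $(x,y)\in R$. I would show $x\rightarrow y = 1$ by antisymmetry, establishing $(x\rightarrow y, 1)\in R$ and $(1, x\rightarrow y)\in R$ separately. The first holds immediately from axiom 2) of Definition~\ref{def: residuated relational systems}. For the second, apply the residuation law (axiom 3) in the form
\[
(1, x\rightarrow y)\in R \iff (1\cdot x, y)\in R \iff (x,y)\in R,
\]
using that $1$ is the identity of the commutative monoid; the right-hand side is the hypothesis, so $(1, x\rightarrow y)\in R$ and antisymmetry gives $x\rightarrow y=1$.

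For part (ii), I would first observe the useful consequence $y\rightarrow y = 1$ for every $y\in A$: reflexivity gives $(y,y)\in R$, and then part (i) converts this into $y\rightarrow y = 1$. Now for $(x\cdot y, y)\in R$, axiom 2) yields $(x, 1)\in R$, i.e.\ $(x, y\rightarrow y)\in R$, and applying the residuation law in the direction $(x, y\rightarrow y)\in R \Rightarrow (x\cdot y, y)\in R$ finishes the first inclusion. For $(x\cdot y, x)\in R$, I would use commutativity of the monoid to rewrite the desired statement as $(y\cdot x, x)\in R$, and then repeat the same argument with the roles of the variables switched: $(y, 1)\in R = (y, x\rightarrow x)\in R$ by (2) together with $x\rightarrow x=1$, and residuation gives $(y\cdot x, x)\in R$.

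No step looks genuinely hard; the only subtle point is recognizing that residuation combined with the fact that $1$ is the monoid identity turns the statement $(x,y)\in R$ into $(1, x\rightarrow y)\in R$, which is exactly what antisymmetry needs to collapse $x\rightarrow y$ to $1$. Everything else is a mechanical application of axioms 2) and 3) together with commutativity and the already-established part (i).
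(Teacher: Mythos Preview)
Your proof is correct and matches the paper's approach essentially step for step: both use residuation on $(x,y)=(1\cdot x,y)\in R$ to obtain $(1,x\rightarrow y)\in R$ and then invoke axiom~2) and antisymmetry for part~(i), and both derive $y\rightarrow y=1$ from reflexivity via part~(i) to feed into residuation for part~(ii). The only difference is that you spell out the use of $1\cdot x=x$ explicitly where the paper just says ``by residuation''.
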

\proof
i) One direction holds by Proposition \ref{prop: aritmetica}. For the converse, suppose $ (x,y)\in R $; then $ (1,x\rightarrow y)\in R $, by residuation. On the other hand, by condition 2) in Definition \ref{def: residuated relational systems}, $ (x\rightarrow y,1)\in R $, and since $ R $ is antisymmetric, it follows that $ x\rightarrow y = 1 $. \\
ii) By reflexivity of $ R $, $ (y,y)\in R $, thus $ y\rightarrow y = 1 $, by i). Since $ (x,1)\in R $, we have $ (x,y\rightarrow y)\in R $ and, by residuation, $ (x\cdot y, y)\in R $. The other claiming is proved analogously using commutativity of multiplication.  
\endproof 

\begin{proposition}\label{prop:aritmetica2}
Let $ \alga $ be a residuated relational system with a reflexive relation $ R $. Then for all $x,y\in A $
\begin{itemize}
\item[(a)] $ (1,x\rightarrow x)\in R $.
\item[(b)] $ ((x\rightarrow y)\cdot x, y)\in R $.
\item[(c)] $ (x,y\rightarrow x\cdot y)\in R $.
\item[(d)] $ (x,1\rightarrow x)\in R  $ and $ (1\rightarrow x, x)\in R $.
\item[(e)] $ (x,(x\rightarrow y)\rightarrow y)\in R $. 
\end{itemize} 
\end{proposition}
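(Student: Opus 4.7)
The plan is to prove each item by choosing a reflexivity witness of the form $(a,a)\in R$ and then transporting it across the residuation biconditional (condition 3) of Definition \ref{def: residuated relational systems}), occasionally invoking the monoid identity $1\cdot a = a$ or commutativity of $\cdot$. All five assertions collapse to this same pattern; the main task is just to pick the right $a$ in each case.

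For (a), take $(x,x)\in R$ by reflexivity. Since $1\cdot x = x$ in the monoid, this reads $(1\cdot x, x)\in R$, and residuation converts it to $(1,x\rightarrow x)\in R$. For (b), apply reflexivity to the element $x\rightarrow y$ to obtain $(x\rightarrow y, x\rightarrow y)\in R$; residuation (with the roles $x\rightsquigarrow x\rightarrow y$, $y\rightsquigarrow x$, $z\rightsquigarrow y$) yields $((x\rightarrow y)\cdot x, y)\in R$. For (c), start from $(x\cdot y, x\cdot y)\in R$ (reflexivity) and use residuation in the other direction to get $(x, y\rightarrow x\cdot y)\in R$.

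For (d), both halves are by the same trick with $1$ absorbed by the monoid: the first part $(x,1\rightarrow x)\in R$ is residuation applied to $(x\cdot 1, x)\in R = (x,x)\in R$; the second part $(1\rightarrow x, x)\in R$ is obtained by starting from $(1\rightarrow x, 1\rightarrow x)\in R$ (reflexivity) and applying residuation to rewrite the right-hand occurrence, giving $((1\rightarrow x)\cdot 1, x)\in R$, i.e.\ $(1\rightarrow x, x)\in R$. Finally, for (e), residuation turns the desired $(x,(x\rightarrow y)\rightarrow y)\in R$ into $(x\cdot(x\rightarrow y), y)\in R$, which after commutativity of $\cdot$ is precisely item (b) just proved.

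There is no real obstacle here: the only mild subtlety is keeping track of which variable plays the role of $x,y,z$ in condition 3) when transporting the reflexivity witness, and remembering that commutativity of $\cdot$ is needed to identify $(x\rightarrow y)\cdot x$ with $x\cdot(x\rightarrow y)$ in the derivation of (e) from (b). Item (b) is the natural one to prove first, since (e) reduces to it; the rest are independent one-step applications of reflexivity and residuation.
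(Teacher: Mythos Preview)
Your proof is correct and essentially identical to the paper's own argument: each item is obtained by applying residuation to a reflexivity witness (with the monoid identity $1\cdot a=a$ and commutativity invoked exactly where you indicate), and (e) is derived from (b) via commutativity just as in the paper.
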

\begin{proof}
(a) By reflexivity $ (x,x) = (1\cdot x, x)\in R $, thus, by residuation $ (1,x\rightarrow x)\in R $.\\
(b) $ (x\rightarrow y, x\rightarrow y)\in R $ yields $ ((x\rightarrow y)\cdot x, y)\in R $. \\
(c) $ (x\cdot y,x\cdot y)\in R $ clearly implies $ (x,y\rightarrow x\cdot y)\in R $. \\
(d) $ (x,x) = (x\cdot 1, x)\in R $ implies $ (x,1\rightarrow x)\in R  $. Similarly, $ (1\rightarrow x, x)\in R $ is obtained by residuation from $ (1\rightarrow x, 1\rightarrow x)\in R $. \\
(e) By (b) and commutativity of multiplication we have $ (x\cdot (x\rightarrow y), y)\in R $, hence $ (x,(x\rightarrow y)\rightarrow y)\in R $.
\end{proof}

Residuated relational systems are introduced to be a generalization of well-known structures as (integral, commutative) residuated lattices and (integral) residuated pomonoids. Still, the aim of the present work is handling with ``genuine'' residuated relational systems, namely systems that cannot be directly turned into an algebra. For this reason, it shall be clear why we will not concentrate our analysis on those systems whose relation is antysimmetric. The most fruitful results can then be reached considering systems, whose relation $ R $ is a pre-order. 

\section{Pre-ordered residuated systems}\label{sec:3.2}
Recall that a pre-order relation $\preceq $ on a set $ A $ is a binary relation which is reflexive and transitive. 
Two elements $ a,b $ in a pre-ordered set $ A $ are \emph{incomparable}, in symbols $ a\parallel b $, if $ a\npreceq b $ and $ b\npreceq a $.
It follows that the relation of incomparability is symmetric. 
\begin{definition}\label{def: sistema res. pre-ordinato}
A \emph{pre-ordered residuated system} is a residuated relational system $ \mathbf{A}=\langle A, \cdot, \rightarrow, 1, \preceq \rangle $, where $ \preceq $ is a pre-order on $ A $. 
\end{definition}


\noindent
By convention we will write $U_{\preceq}(x) $ instead of $ U_{\preceq}(x,x) $. It readily follows, by transitivity of $ \preceq $, that if $ x\preceq y $ then $ U_{\preceq}(y)\subseteq U_{\preceq}(x) $.  \\
The following proposition shows the basic properties of pre-ordered residuated systems.

\begin{proposition}\label{pro: aritmetica quasi-order}
Let $ \mathbf{A} $ be a pre-ordered residuated system. 
Then 
\begin{itemize}
\item[(a)] $ \cdot $ preserves the pre-order in both positions
\item[(b)] $ x\preceq y $ implies $ y\rightarrow z\preceq x\rightarrow z $ and $ z\rightarrow x\preceq z\rightarrow y $
\item[(c)] $ x\cdot (y\rightarrow z) \preceq y\rightarrow x\cdot z $
\item[(d)] $ x\cdot y\rightarrow z \preceq x\rightarrow (y\rightarrow z) $ 
\item[(e)] $ x \rightarrow (y\rightarrow z)\preceq x\cdot y\rightarrow z $
\item[(f)] $ x\rightarrow (y\rightarrow z)\preceq y\rightarrow (x\rightarrow z) $
\item[(g)] $ (x\rightarrow y)\cdot(y\rightarrow z)\preceq x\rightarrow z $
\item[(h)] $ x\cdot y\preceq y,x $
\item[(i)] $ x\rightarrow y\preceq (y\rightarrow z)\rightarrow (x\rightarrow z) $
\end{itemize}
\end{proposition}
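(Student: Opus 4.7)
The plan is to derive items (a)--(i) by systematically exploiting the basic inequalities in Proposition \ref{prop:aritmetica2}, together with the residuation law, commutativity and associativity of $\cdot$, and transitivity of $\preceq$. The single most important ingredient is the ``modus ponens'' inequality $(u \to v) \cdot u \preceq v$ from Proposition \ref{prop:aritmetica2}(b): essentially every item below is obtained by multiplying this inequality by a suitable term and then residuating. The recurring pattern is: to prove $a \preceq b \to c$, it suffices to show $a \cdot b \preceq c$ and residuate.

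I would first prove (a), because all the remaining items rely on monotonicity of multiplication. Given $x \preceq y$, Proposition \ref{prop:aritmetica2}(c) yields $y \preceq z \to y \cdot z$; transitivity then gives $x \preceq z \to y \cdot z$, and residuation converts this into $x \cdot z \preceq y \cdot z$. The other position follows by commutativity of $\cdot$. Once (a) is in hand, (h) is immediate: by condition 2) of Definition \ref{def: residuated relational systems} we have $y \preceq 1$ and $x \preceq 1$, whence (a) gives $x \cdot y \preceq x \cdot 1 = x$ and $x \cdot y \preceq 1 \cdot y = y$.

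The remaining items follow the same template. For (b), combine $(y \to z) \cdot y \preceq z$ with $(y \to z) \cdot x \preceq (y \to z) \cdot y$ (by (a)) and residuate; the second half uses $z \cdot (z \to x) \preceq x \preceq y$, hence $z \to x \preceq z \to y$ by residuation. For (c), multiply $(y \to z) \cdot y \preceq z$ by $x$ using (a), rearrange by commutativity to obtain $x \cdot (y \to z) \cdot y \preceq x \cdot z$, and residuate. For (d), start from $(x \cdot y \to z) \cdot (x \cdot y) \preceq z$, reassociate as $((x \cdot y \to z) \cdot x) \cdot y \preceq z$, and residuate twice. For (e), apply Proposition \ref{prop:aritmetica2}(b) once to get $(x \to (y \to z)) \cdot x \preceq y \to z$, multiply on the right by $y$ via (a), and compose with $(y \to z) \cdot y \preceq z$ to reach $(x \to (y \to z)) \cdot (x \cdot y) \preceq z$; residuate once. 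Item (f) chains (d), commutativity of $\cdot$, and (e). For (g), iterate Proposition \ref{prop:aritmetica2}(b): $((x \to y) \cdot (y \to z)) \cdot x \preceq (y \to z) \cdot y \preceq z$, then residuate. Finally (i) is (g) residuated one more time.

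I do not expect a serious obstacle, since every step is mechanical once (a) is in place; the only care needed is to perform the commutativity and associativity rearrangements correctly before each residuation step, and to carry out the items in an order in which each uses only earlier ones, namely (a), (h), (b), (c), (d), (e), (f), (g), (i).
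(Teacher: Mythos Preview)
Your proposal is correct and follows essentially the same approach as the paper's own proof: both derive (a) first via Proposition~\ref{prop:aritmetica2}(c) and transitivity, then obtain all remaining items by combining the ``modus ponens'' inequality $(u\to v)\cdot u\preceq v$ with monotonicity of $\cdot$, associativity/commutativity, and residuation, in the same logical order (with (f) from (d)+(e), (i) from (g)). The only cosmetic differences are that you place (h) right after (a) and prove (c) by multiplying $(y\to z)\cdot y\preceq z$ by $x$ via (a), whereas the paper instead chains with $z\preceq x\to x\cdot z$ from Proposition~\ref{prop:aritmetica2}(c); both routes are equally valid and one-line.
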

\proof
(a) Suppose $ x\preceq y $. Since $ \preceq $ is reflexive, $ y\cdot z\preceq y\cdot z $, hence $ y\preceq z\rightarrow (z\cdot y) $. Then, by transitivity, we get $ x\preceq z\rightarrow (z\cdot y) $, therefore, by residuation, $ x\cdot z\preceq y\cdot z $. Preservation of the pre-order in both positions follows trivially by commutativity of multiplication. \\
(b) Let $ x\preceq y $, then $ x\cdot(y\rightarrow z)\preceq y\cdot(y\rightarrow z)\preceq z $, where we have used (a) and then commutativity, residuation and reflexivity of $ \preceq $. By transitivity, $ x\cdot(y\rightarrow z)\preceq z $, i.e. $ y\rightarrow z\preceq x\rightarrow z $ by residuation (and commutativity). For the remaining claim, residuation and reflexivity of $ \preceq $ guarantee that $ z\cdot (z\rightarrow x) \preceq x $, hence, by transitivity, $ z\cdot (z\rightarrow x) \preceq y $, therefore $ z\rightarrow x \preceq z\rightarrow y $.  \\
(c) By Proposition \ref{prop:aritmetica2} (b), $ (y\rightarrow z)\cdot y\preceq z $ and by Proposition \ref{prop:aritmetica2} (c), $ z\preceq x\rightarrow z\cdot x$, thus $ (y\rightarrow z)\cdot y\preceq x\rightarrow z\cdot x  $ and, using residuation (twice) and commutativity, we have $ x\cdot (y\rightarrow z) \preceq y\rightarrow x\cdot z $.   \\
(d) is proved as follows:
\begin{align*}
& (x\cdot y\rightarrow z)\cdot (x\cdot y)\preceq z && (\text{Proposition \ref{prop:aritmetica2}})
\\ & ((x\cdot y\rightarrow z)\cdot x)\cdot y\preceq z && (\text{Associativity})
\\ & (x\cdot y\rightarrow z)\cdot x\preceq y \rightarrow z && (\text{Residuation})
\\ & x\cdot y\rightarrow z \preceq x\rightarrow (y\rightarrow z) && (\text{Residuation})
\end{align*}
(e) is proved similarly. 
\begin{align*}
& x\rightarrow (y\rightarrow z) \preceq x\rightarrow (y\rightarrow z) &&
\\ & (x\rightarrow (y\rightarrow z))\cdot x \preceq y\rightarrow z && (\text{Residuation})
\\ & ((x\rightarrow (y\rightarrow z))\cdot x)\cdot y \preceq z && (\text{Residuation})
\\ & (x\rightarrow (y\rightarrow z))\cdot (x\cdot y) \preceq z && (\text{Associativity})
\\ & x\rightarrow (y\rightarrow z)\preceq (x\cdot y) \rightarrow z && (\text{Residuation})
\end{align*}
(f) Using d) and (e), we have $ x\rightarrow (y\rightarrow z)\preceq x\cdot y\rightarrow z = y\cdot x\rightarrow z\preceq y\rightarrow (x\rightarrow z) $. \\
(g) By Proposition \ref{prop:aritmetica2} and commutativity, $ x\cdot (x\rightarrow y)\preceq y $, hence $ x\cdot (x\rightarrow y)\cdot(y\rightarrow z)\preceq y\cdot(y\rightarrow z)\preceq z $, thus, by residuation, $ (x\rightarrow y)\cdot(y\rightarrow z)\preceq x\rightarrow z $. \\
(h) Since $ x\preceq 1 $, we have $ x\cdot y\preceq 1\cdot y = y $; similarly for $ x\cdot y\preceq x $. \\
(i) follows from (g) using residuation. 
\endproof
\noindent
In the following result we give some necessary and sufficient conditions for a generic structure $ \langle A, \cdot ,\rightarrow , 1, \preceq\rangle $ to be effectively a pre-ordered residuated system. 
\begin{theorem}\label{th: A è residuato sse}
Let $\alga =\langle A, \cdot ,\rightarrow, 1, \preceq\rangle $ be a quintuple such that $ \cdot $ and $ \rightarrow $ are binary operations on $A$, $ \preceq $ is a binary relation on $ A $ and $ 1\in A $. Then $ \alga $ is a pre-ordered residuated system if and only if it satisfies the following conditions: 
\begin{itemize}
\item[(a)] $ \langle A,\cdot, 1\rangle $ is a commutative monoid
\item[(b)] $ \preceq $ is a pre-order on $ A $ such that $ x\preceq 1 $ for each $ x\in A $
\item[(c)] $ x\cdot y\rightarrow z\preceq x\rightarrow (y\rightarrow z) $ and $ x\rightarrow (y\rightarrow z)\preceq x\cdot y\rightarrow z $ for each $x,y,z\in A$.
\item[(d)] $ x\preceq y$ if and only if $ 1\preceq x\rightarrow y $ for each $x,y\in A$.
\end{itemize}

\end{theorem}
\begin{proof}
Suppose that $ \alga $ is a pre-ordered residuated system, then (a) and (b) hold by definition; (c) follows from Proposition \ref{pro: aritmetica quasi-order} and (d) is easily derived using residuation. \\
Conversely, assume $ \alga $ satisfies conditions (a) to (d). We only have to derive residuation to get a residuated relational system. Assume $ x\cdot y\preceq z $. By conditions (d) and (c), we have $  1\preceq x\cdot y\rightarrow z\preceq x\rightarrow (y\rightarrow z) $ and, since $ \preceq $ is transitive, $  1\preceq x\rightarrow (y\rightarrow z) $, thus, using (d) $ x\preceq y\rightarrow z $. On the other hand, assume $ x\preceq y\rightarrow z $, then, by (d), $ 1\preceq x\to (y\rightarrow z)\preceq x\cdot y\rightarrow z $, by (c). Due to transitivity and (d) we have $ x\cdot y\preceq z $. 
\end{proof}

The concept of \emph{directoid} has been originally introduced by Je\v{z}ek and Quackenbush \cite{JazekQ90}. A comprehensive and detailed exposition of the theory of directoids can be found in \cite{Chajdabook}, \cite{CGKGLP} and \cite{Chajda07}. Basically, directoids are the algebraic counterpart of directed partially ordered sets. 

Following the same ideas, we can think of capturing some properties of pre-ordered residuated systems by associating them to algebraic structures. We therefore introduce a binary operation on a pre-ordered residuated system as follows:
\begin{definition}\label{def: pseudo-join}
Let $ \mathbf{A}=\langle A, \cdot, \rightarrow, 1, \preceq \rangle $ be a pre-ordered residuated system. We define the following binary operation $ \sqcup $ on $ A $ as follows: 
\begin{itemize}
\item[i)] If $ x\preceq y $ then $ x\sqcup y=y $; 
\item[ii)] If $ x\npreceq y $ and $ y\preceq x $ then $ x\sqcup y = y\sqcup x = x $;
\item[iii)] If $ x\parallel y $ then $ x\sqcup y =y\sqcup x\in U_{\preceq}(x,y) $ is chosen arbitrarily.  
\end{itemize}

\end{definition} 
\noindent

The following elementary fact holds in any pre-ordered residuated system equipped with a binary operation defined as in Definition \ref{def: pseudo-join}.
\begin{lemma}\label{lem: x sotto x,y}
Let $ \mathbf{A} $ be a pre-ordered residuated system and $ \sqcup $ a binary operation on $ A $, defined as in \emph{Definition \ref{def: pseudo-join}}. Then for any $ x,y\in A $, $ x\preceq x\sqcup y $ and $ y\preceq x\sqcup y $. 
\proof
For any $ x,y\in A $, the following cases may arise: 
\begin{enumerate}
\item $ x\preceq y $, then $ x\sqcup y = y $ and clearly $ x,y\preceq x\sqcup y $. 
\item $ x\npreceq y $ and $ y\preceq x $, then $ x\sqcup y = x $, hence by reflexivity of $ \preceq $, $ x,y\preceq x\sqcup y $. 
\item $ x\parallel y $, then $ x,y\preceq x\sqcup y $, since $ x\sqcup y\in U_{\preceq}(x,y) $. 
\end{enumerate}
\endproof
\end{lemma}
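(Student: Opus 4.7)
The plan is to proceed by direct case analysis on the three clauses appearing in Definition \ref{def: pseudo-join}. Since $\sqcup$ is defined piecewise according to whether (i) $x\preceq y$, (ii) $y\preceq x$ but $x\npreceq y$, or (iii) $x\parallel y$, and these three cases are exhaustive and mutually exclusive, it suffices to verify both $x\preceq x\sqcup y$ and $y\preceq x\sqcup y$ separately in each case. No induction, no manipulation of the residuated structure, and nothing from Proposition \ref{pro: aritmetica quasi-order} is needed; only reflexivity of $\preceq$ and the definitions themselves will be used.

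In case (i), we have $x\sqcup y=y$, so $y\preceq x\sqcup y$ reduces to reflexivity of $\preceq$, while $x\preceq x\sqcup y$ is exactly the defining hypothesis $x\preceq y$. Case (ii) is symmetric: since $x\sqcup y=x$, the inequality $x\preceq x\sqcup y$ is reflexivity and $y\preceq x\sqcup y$ is the hypothesis $y\preceq x$. In case (iii), the element $x\sqcup y$ is chosen to lie in $U_{\preceq}(x,y)$; unwinding Definition \ref{def:upper cone} immediately yields both $x\preceq x\sqcup y$ and $y\preceq x\sqcup y$, irrespective of which element of the upper cone was picked.

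There is essentially no obstacle here, the lemma being a sanity check that the piecewise operation $\sqcup$ behaves like a (one-sided) join with respect to the pre-order. The only mild subtlety is in case (iii), where $x\sqcup y$ is chosen arbitrarily inside $U_{\preceq}(x,y)$: one must invoke only membership in the upper cone, not any specific choice, so that the argument is independent of the (in general non-canonical) definition of $\sqcup$ on incomparable pairs. This is precisely why the upper cone was introduced as the target of the choice in clause (iii) of the definition.
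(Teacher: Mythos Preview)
Your proposal is correct and follows essentially the same approach as the paper: both proceed by case analysis on the three clauses of Definition \ref{def: pseudo-join}, using reflexivity of $\preceq$ in cases (i) and (ii) and the definition of $U_{\preceq}(x,y)$ in case (iii). The paper's proof is slightly terser, but the content is identical.
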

\noindent
The above lemma expresses the intuitive fact that for any elements $ x,y\in A $, $ x\sqcup y\in U_{\preceq}(x,y) $.
\begin{definition}\label{def: quasi-directoid}
An algebra $ \alga = \langle A, \sqcup\rangle $ of type $ \langle 2\rangle $ is  called a \emph{quasi-directoid} if it satisfies: 
\begin{itemize}
\item[a)] $ x\sqcup x = x $; 
\item[b)] $ x\sqcup (x\sqcup y) = x\sqcup y $, $y\sqcup (x\sqcup y) = x\sqcup y$;
\item[c)] $ x\sqcup ((x\sqcup y)\sqcup z)= (x\sqcup y)\sqcup z $;
\end{itemize}
\end{definition}
\noindent
Now we can give an algebraic counterpart to the concept of pre-ordered residuated system. 

\begin{definition}\label{def:pre-ordered residuated quasi-lattices}
A \emph{residuated quasi-directoid} is an algebra $ \mathbf{A}=\langle A, \cdot, \rightarrow, \sqcup, 1\rangle  $ of type $ ( 2,2,2,0)  $ such that the term reduct $ \langle A, \sqcup\rangle $ is a quasi-directoid satisfying also the following axioms: 
\begin{itemize}
\item[e)] $ \langle A, \cdot, 1\rangle $ is a commutative monoid;
\item[f)] $ x\sqcup 1 = 1 $;
\item[g)] $ (x\cdot y)\sqcup z = z $ if and only if $ x\sqcup (y\rightarrow z)= y\rightarrow z $.
\end{itemize}
\end{definition}
\noindent
The terminology introduced in the definition above stresses the similarities with directoids. Indeed the term reduct $ \langle A, \sqcup \rangle $ is not very different from a directoid: any directoid satisfies identities a), b) and c), however, in general, the quasi-directoid does not satisfy $ (x\sqcup y) \sqcup x = x\sqcup y $. We will refer to the operation $ \sqcup $ as quasi-join. \\
Quasiidentity g) expresses a condition of residuation, namely the operation $ \rightarrow $ can be interpreted as the residuum of multiplication.

It is our aim to show a correspondence between pre-ordered residuated systems and residuated quasi-directoids, so that it will appear clear that the latter represent the algebraic counterpart of the former.

\begin{theorem}\label{th: algebra indotta dal sistema preordinato}
Let $\mathbf{A}=\langle A, \cdot, \rightarrow, 1, \preceq \rangle$ be a pre-ordered residuated system. Then, by defining a binary operation $\sqcup$ according to Definition \ref{def: pseudo-join}, the algebra $ \langle A, \cdot, \rightarrow, \sqcup , 1 \rangle $ is a residuated quasi-directoid.
\proof
We proceed by checking that $ \langle A, \cdot, \rightarrow, \sqcup , 1 \rangle $ satisfies all the conditions in Definition \ref{def:pre-ordered residuated quasi-lattices}. \\
e) trivially follows from the assumption that $ \mathbf{A} $ is a pre-ordered residuated system. \\
f) $ x \sqcup 1 = 1 $ since $ x\preceq 1 $ for each $ x\in A $. \\
g) follows trivially from the fact that $ \mathbf{A} $ is a pre-ordered residuated system. \\
Let us now check that the reduct $ \langle A, \sqcup \rangle $ is a quasi-directoid. \\
a) $ x\sqcup x = x $ since $ \preceq $ is reflexive. \\
b) We proceed through a case-splitting argument. \\
Case 1: Assume $ x\preceq y $. Then by Definition \ref{def: pseudo-join}, $ x\sqcup y = y $, hence $ x\sqcup(x\sqcup y) = x\sqcup y$ and $ y\sqcup (x\sqcup y) = y\sqcup y = y = x\sqcup y $. \\
Case 2: Assume $ x\npreceq y $ and $ y\preceq x $. Hence $ x\sqcup y=y\sqcup x= x $. Then $ x\sqcup (x\sqcup y) = x\sqcup x = x = x\sqcup y $ and $ y\sqcup (x\sqcup y)= y\sqcup x = x = x\sqcup y $. \\
Case 3: Assume $ x\npreceq y $ and $ y\npreceq x $. Then $ x\sqcup y\in U_{\preceq}(x,y) $. Since $ y\preceq x\sqcup y $ and $ x\preceq x\sqcup y$, by Lemma \ref{lem: x sotto x,y}, we get that $ x\sqcup (x\sqcup y) = x\sqcup y $ and $ y\sqcup (x\sqcup y) = x\sqcup y $. \\
c) As for b), we consider all the possible cases that may arise. \\
Case 1: Assume $ x\preceq y $. The left-hand side of equation c) reads $ x\sqcup ((x\sqcup y)\sqcup z)= x\sqcup (y\sqcup z) = y\sqcup z $, since $ x\preceq y\preceq y\sqcup z $, by Lemma \ref{lem: x sotto x,y}. Similarly, under this assumption, the right-hand side reads $ (x\sqcup y)\sqcup z = y\sqcup z $. \\
Case 2: Assume $ x\npreceq y $ and $ y\preceq x $. Then we have $ x\sqcup ((x\sqcup y)\sqcup z)= x\sqcup (x\sqcup z)= x\sqcup z $, by Lemma \ref{lem: x sotto x,y}. On the other hand, the right-hand side reads $ (x\sqcup y)\sqcup z = x\sqcup z $. \\
Case 3: Assume $ x\npreceq y $ and $ y\npreceq x $. Then, by definition, $ x\sqcup y= y\sqcup x= w $, for a certain $ w\in U_{\preceq}(x,y)$. Therefore, the left-hand side of equation c) is $ x\sqcup ((x\sqcup y)\sqcup z)= x\sqcup (w\sqcup z) = w\sqcup z $, as $ x\preceq w\preceq w\sqcup z $. The right-hand side reads $ (x\sqcup y)\sqcup z = w\sqcup z $. 
\endproof
\end{theorem}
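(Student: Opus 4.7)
The plan is to verify directly that $\langle A,\cdot,\rightarrow,\sqcup,1\rangle$ meets each clause of Definition \ref{def:pre-ordered residuated quasi-lattices}. The organizing observation I would establish first is the biconditional $u\sqcup v = v$ iff $u\preceq v$, which follows from a short inspection of the three clauses of Definition \ref{def: pseudo-join}; once in hand, most of the required identities reduce to one-line consequences of reflexivity, transitivity, and Lemma \ref{lem: x sotto x,y}, with no tedious three-way case split needed.

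First I would dispose of the easy clauses. Condition (e) is part of the hypothesis that $\mathbf A$ is a pre-ordered residuated system. For (f), axiom 2) of Definition \ref{def: residuated relational systems} gives $x\preceq 1$, so clause i) of Definition \ref{def: pseudo-join} yields $x\sqcup 1 = 1$. For the residuation quasi-identity (g), the biconditional above turns $(x\cdot y)\sqcup z = z$ into $x\cdot y\preceq z$ and $x\sqcup(y\rightarrow z) = y\rightarrow z$ into $x\preceq y\rightarrow z$, and condition 3) of Definition \ref{def: residuated relational systems} supplies the equivalence.

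For the quasi-directoid axioms, (a) is immediate from reflexivity. For (b), Lemma \ref{lem: x sotto x,y} gives $x\preceq x\sqcup y$ and $y\preceq x\sqcup y$, whence clause i) of Definition \ref{def: pseudo-join} yields both $x\sqcup(x\sqcup y) = x\sqcup y$ and $y\sqcup(x\sqcup y) = x\sqcup y$. For (c), two applications of Lemma \ref{lem: x sotto x,y} combined with transitivity give $x\preceq x\sqcup y\preceq (x\sqcup y)\sqcup z$, and clause i) then closes the equation $x\sqcup((x\sqcup y)\sqcup z) = (x\sqcup y)\sqcup z$.

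The only point demanding genuine care is the converse half of the biconditional, namely that $u\sqcup v = v$ implies $u\preceq v$. In clause ii) of Definition \ref{def: pseudo-join} we have $u\sqcup v = u$, so $u\sqcup v = v$ would force $u = v$ and hence $u\preceq v$, contradicting the hypothesis $u\not\preceq v$ of case ii); in clause iii), Lemma \ref{lem: x sotto x,y} gives $v\preceq u\sqcup v$, so $u\sqcup v = v$ would yield $u\preceq v$ against incomparability. This is the only place where one must actually look inside Definition \ref{def: pseudo-join}; everything else follows cleanly from that lemma.
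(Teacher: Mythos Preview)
Your approach is correct and is genuinely more economical than the paper's. The paper verifies the quasi-directoid identities (b) and (c) by a three-way case split on the relative position of $x$ and $y$, whereas you observe once and for all that clause~i) of Definition~\ref{def: pseudo-join} together with Lemma~\ref{lem: x sotto x,y} (and transitivity for (c)) already delivers $x\sqcup(x\sqcup y)=x\sqcup y$, $y\sqcup(x\sqcup y)=x\sqcup y$, and $x\sqcup((x\sqcup y)\sqcup z)=(x\sqcup y)\sqcup z$ uniformly. This is cleaner, and it isolates the only place where the internal case structure of Definition~\ref{def: pseudo-join} matters: the converse half of the biconditional $u\sqcup v=v\Leftrightarrow u\preceq v$, which you correctly note is needed for clause~(g). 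The paper, by contrast, treats (g) as ``trivial'' without making explicit that this converse is what is being used.

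One small slip: in your last paragraph, for clause~iii) you invoke $v\preceq u\sqcup v$ from Lemma~\ref{lem: x sotto x,y}, but that together with $u\sqcup v=v$ only gives $v\preceq v$. You want the other half of the lemma, namely $u\preceq u\sqcup v$, which with $u\sqcup v=v$ yields $u\preceq v$, contradicting incomparability. The fix is immediate and the argument stands.
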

\noindent

It shall be pointed out that in general, any directed relational system can be associated to more than one quasi-directoid, since for each pair of incomparable elements $ x,y $, the element $ x\sqcup y $ is not uniquely determined in the upper cone of the two elements. 

Following the same idea developed in \cite{ChLa13} and in the previous section, we can define a relation $ \preceq_{I} $, \emph{induced} by a quasi-directoid $ \mathbf{A} $, as follows: 
\begin{equation}\label{eq: relazione indotta}
x\preceq_{I} y  \;\; \text{if and only if} \;\; x\sqcup y = y.
\end{equation}
Given a residuated quasi-directoid $ \mathbf{A} $, we refer to the relational system $ \langle A, \cdot, \rightarrow, 1, \preceq_{I} \rangle $, as to the induced relational system. 

We can also prove a converse statement of Theorem \ref{th: algebra indotta dal sistema preordinato}, i.e. that the relational system induced by a residuated quasi-directoid is actually a pre-ordered residuated system. 
\begin{theorem}\label{th: dal quasi-lattice al pre-order system}
Let $ \mathbf{A}= \langle A, \cdot, \rightarrow, \sqcup, 1\rangle  $ be a residuated quasi-directoid and $ \preceq_{I} $ the induced relation on $ A $. Then the relational system $ \langle A, \cdot , \rightarrow , 1, \preceq_{I} \rangle $ is a pre-ordered residuated system. 
\proof
Suppose that $ \mathbf{A} $ is a residuated quasi-directoid. 
We firstly prove that $ \preceq_{I} $ is a pre-order on $ A $. Since $ x\sqcup x = x $, then $ x\preceq_{I} x $ for each $ x\in A $, i.e. $ \preceq $ is reflexive.  
For transitivity, suppose that $ a\preceq_{I} b\preceq_{I} c $, we have $ a\sqcup b= b $ and $ b\sqcup c= c $. Therefore:
\begin{align*}
&  a\sqcup c= a\sqcup (b\sqcup c) &&
\\ & = a\sqcup ((a\sqcup b)\sqcup c) &&
\\ & = (a\sqcup b)\sqcup c &&
\\ & =b\sqcup c = c, &&
\end{align*}
hence $ a\preceq_{I} c $.
We still need to check that $ \langle A, \cdot , \rightarrow , 1, \preceq_{I} \rangle $ satisfies conditions 1), 2), 3) of Definition \ref{def: residuated relational systems}. \\
Condition 1) is trivially satisfied. 
Conditions 2) and 3) are direct consequences of axiom f) and g), respectively.  
\endproof
\end{theorem}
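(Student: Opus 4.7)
The plan is to verify in sequence that $\preceq_I$ is a pre-order and that the structure $\langle A,\cdot,\rightarrow,1,\preceq_I\rangle$ satisfies the three clauses of Definition \ref{def: residuated relational systems}. All the work is bookkeeping: each required property of $\preceq_I$ corresponds to exactly one axiom of a residuated quasi-directoid, so the proof is essentially a translation exercise.

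First I would establish that $\preceq_I$ is a pre-order. Reflexivity is immediate from axiom a), since $x\sqcup x=x$ gives $x\preceq_I x$. For transitivity, suppose $a\preceq_I b$ and $b\preceq_I c$, i.e.\ $a\sqcup b=b$ and $b\sqcup c=c$. Then I would compute
\[
a\sqcup c \;=\; a\sqcup(b\sqcup c)\;=\;a\sqcup\bigl((a\sqcup b)\sqcup c\bigr)\;=\;(a\sqcup b)\sqcup c\;=\;b\sqcup c\;=\;c,
\]
where the third equality uses axiom c) of a quasi-directoid. Hence $a\preceq_I c$.

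Next I would verify the three conditions of Definition \ref{def: residuated relational systems}. Condition 1) is given by axiom e). Condition 2) amounts to $x\sqcup 1=1$ for every $x\in A$, which is axiom f). Condition 3), residuation, is precisely the content of quasiidentity g): unwinding the definition of $\preceq_I$, the statement $(x\cdot y,z)\in\preceq_I$ means $(x\cdot y)\sqcup z=z$, and $(x,y\rightarrow z)\in\preceq_I$ means $x\sqcup(y\rightarrow z)=y\rightarrow z$, and these two equalities are equivalent by g).

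There is no real obstacle here; the only place requiring care is transitivity, where the three-fold application of the quasi-directoid axioms must be done in the right order (using axiom b) to rewrite $b$ as $a\sqcup b$ before invoking c)). Once that computation is laid out, everything else is a direct appeal to a single axiom per clause.
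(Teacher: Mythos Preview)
Your proof is correct and follows essentially the same approach as the paper's own argument, including the identical computation for transitivity and the one-to-one correspondence between axioms e), f), g) and conditions 1), 2), 3). One tiny quibble: in your closing remark, the rewriting of $b$ as $a\sqcup b$ in the transitivity chain comes from the hypothesis $a\preceq_I b$, not from axiom b) of a quasi-directoid.
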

We are now going to show that the multiplication for a supremal element is a supremal element for the set of multiples. 
\begin{proposition}\label{prop: moltiplicazione di sup element}
Let $ \alga $ be a pre-ordered residuated system, $ Z\subseteq A $ and $ a\in A $. If $ k $ is a supremal element for $ Z $ then $ a\cdot k $ is a supremal element for the set  $ aZ=\{ a\cdot z: z\in Z\} $. 
\end{proposition}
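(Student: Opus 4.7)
My plan is to verify the two defining clauses of ``supremal element for $aZ$'' separately: (i) that $a\cdot k$ is an upper bound of $aZ$ with respect to $\preceq$, and (ii) that $a\cdot k$ sits below every other upper bound of $aZ$.

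For (i), since $k$ is a supremal element for $Z$, we have $z\preceq k$ for every $z\in Z$. Applying Proposition \ref{pro: aritmetica quasi-order}(a), which states that $\cdot$ preserves the pre-order in both arguments, I conclude $a\cdot z\preceq a\cdot k$ for every $z\in Z$, so $a\cdot k$ lies above every element of $aZ$.

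For (ii), I take an arbitrary $w\in A$ with $w\neq a\cdot k$ such that $a\cdot z\preceq w$ for all $z\in Z$, and I need to show $a\cdot k\preceq w$. The natural move is to apply residuation: from $a\cdot z\preceq w$ for all $z\in Z$, I obtain $z\preceq a\to w$ for all $z\in Z$. I would then like to invoke supremality of $k$ to conclude $k\preceq a\to w$, and reverse the residuation step to reach $a\cdot k\preceq w$.

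The subtlety — and what I expect to be the main obstacle — is that the supremality clause of Definition \ref{def: supremal element} requires the candidate upper bound of $Z$ to be \emph{distinct} from $k$ before it can be compared with $k$. So I need to split into two cases. If $a\to w\neq k$, then supremality of $k$ gives $k\preceq a\to w$, and residuation produces $a\cdot k\preceq w$, as desired. If instead $a\to w=k$, then I bypass supremality entirely: by Proposition \ref{prop:aritmetica2}(b) together with commutativity, $a\cdot(a\to w)\preceq w$, i.e.\ $a\cdot k\preceq w$. Either way the required inequality holds, which completes the verification of (ii) and hence the proposition.
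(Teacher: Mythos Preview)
Your argument is correct and follows the same residuation-based route as the paper: monotonicity of $\cdot$ for the upper-bound clause, then passing from $a\cdot z\preceq w$ to $z\preceq a\to w$ and back. Your case split on whether $a\to w=k$ is in fact an improvement over the paper's proof, which invokes supremality of $k$ to conclude $k\preceq a\to t$ without checking that $a\to t\neq k$; your use of Proposition~\ref{prop:aritmetica2}(b) cleanly disposes of that boundary case.
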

\begin{proof}
Let $ k $ be a supremal element for $ Z $, then $ z\preceq k $ for each $ z\in Z $, thus by Proposition \ref{pro: aritmetica quasi-order}, $ a\cdot z\preceq a\cdot k $. Assume now that $ a\cdot z\preceq t $, for each $ z\in Z $. Then $ z\preceq a\rightarrow t $ and, since $ k $ is a supremal element for $ Z $, $ k\preceq a\rightarrow t $, whence $ a\cdot k\preceq t $, i.e. the element $ a\cdot k $ is a supremal element for the set $ aZ $.
\end{proof}
\noindent
We recall that any pre-order relation on a set $ A $ generates an equivalence relation as follows. 
\begin{equation}\label{def: Clifford-McLean relation}
 (x,y)\in \theta\;\;  \emph{if and only of}\;\;  x\preceq y \;\; \emph{and}\;\;  y\preceq x .
\end{equation}
\noindent
The equivalence relation above turns out to be very useful to get a poset out of a pre-ordered residuated system. Moreover, notice that relation $ \theta $ can be defined on a  residuated quasi-directoid using equalities, indeed: 
\begin{equation}\label{eq: Clifford-McLean relation by equations}
 (x,y)\in \theta \;\;\; \text{if and only if} \;\;\;  x\sqcup y = y \;\; \text{and} \;\;  y\sqcup x = x.
\end{equation}

\begin{proposition}\label{prop: Clifford-McLean è una congruenza}
Let $ \mathbf{A}=\langle A, \cdot, \to, \sqcup, 1\rangle $ be a residuated quasi-directoid and $ \preceq $ the induced pre-order. Let $\theta$ be the equivalence relation defined in \eqref{eq: Clifford-McLean relation by equations}. If $\theta$ is a congruence on the reduct $\langle A,\sqcup \rangle$, then $ \theta $ is a congruence on $ \mathbf{A} $. 
\proof 
We only need to prove that $\theta$ preserves multiplication and its residuum. Suppose $ (x,y)\in \theta $. It holds $ (x\cdot z, y\cdot z)\in \theta $, as, by Proposition \ref{pro: aritmetica quasi-order}, multiplication preserves the pre-order. As regards the residual, suppose $ (x,y)\in\theta $, then, applying Proposition \ref{pro: aritmetica quasi-order} (b), one gets  $ (x\rightarrow z, y\rightarrow z)\in\theta $ and $ (z\rightarrow x, z\rightarrow y)\in \theta$. 
\endproof
\end{proposition}
\noindent
The importance of relation $ \theta $ is justified by the fact that the quotient $ A/\theta $ turns naturally into a poset. It is indeed folklore that if 
 $ \langle A, \preceq \rangle $ is a pre-ordered set and $ \theta $ the equivalance relation introduced above then the binary relation $ \leq $ defined on $ A/\theta $ by: 
$$ [a]_{_\theta}\leq [b]_{_\theta} \text{ if and only if } a\preceq b $$
for any $ a,b\in A $, is a partial ordering on $ A/\theta $, see for example \cite{Rasiowa63book}. 

It follows from Proposition \ref{prop: Clifford-McLean è una congruenza} and the above observation that it is possible to get a pocrim (see Example 1) as a quotient of a residuated quasi-directoid.
\begin{corollary}\label{cor: un Pocrim da un residuated relational quasi-directoid}
Let $ \mathbf{A}$ be a residuated quasi-directoid and $ \theta $ the equivalence relation defined in \eqref{eq: Clifford-McLean relation by equations}. If $\theta$ is a congruence on the reduct $\langle A, \sqcup\rangle$, then $ \mathbf{A}/\theta $ is a pocrim.  
\end{corollary}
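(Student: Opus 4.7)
The plan is to combine Proposition \ref{prop: Clifford-McLean � una congruenza}, Theorem \ref{th: dal quasi-lattice al pre-order system}, and the folklore fact about quotients of pre-ordered sets, and then merely check that the quotient structure satisfies the definition of a pocrim as recalled in Example \ref{ex: POCRIMs}.

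First, I would invoke Theorem \ref{th: dal quasi-lattice al pre-order system} to pass from the residuated quasi-directoid $\mathbf{A}$ to the induced pre-ordered residuated system $\langle A,\cdot,\rightarrow,1,\preceq_I\rangle$, where $x\preceq_I y$ iff $x\sqcup y = y$. Under the hypothesis that $\theta$ is a congruence on $\langle A,\sqcup\rangle$, Proposition \ref{prop: Clifford-McLean � una congruenza} upgrades $\theta$ to a full congruence on $\mathbf{A}$, so the operations $\cdot$, $\rightarrow$ (and $1$) descend well-defined to $\mathbf{A}/\theta$; the monoid axioms and commutativity transfer immediately, so $\langle A/\theta,\cdot,1\rangle$ is a commutative monoid.

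Next, I would define $[a]_\theta \leq [b]_\theta$ iff $a\preceq_I b$; the folklore result cited after Proposition \ref{prop: Clifford-McLean � una congruenza} yields that $\leq$ is a genuine partial order on $A/\theta$, since passing to $\theta$-classes precisely enforces antisymmetry. Integrality (i.e. $1$ is the top) follows from axiom f) of Definition \ref{def:pre-ordered residuated quasi-lattices}, which gives $x\preceq_I 1$ for every $x$, hence $[x]_\theta\leq [1]_\theta$. Compatibility of $\cdot$ with $\leq$ (making the structure a pomonoid) is immediate from Proposition \ref{pro: aritmetica quasi-order} (a), applied in the induced pre-ordered system and then pushed down through the congruence $\theta$.

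The last thing to check is residuation in the quotient: $[x]_\theta \cdot [y]_\theta \leq [z]_\theta$ iff $[x]_\theta \leq [y]_\theta \rightarrow [z]_\theta$. Since $\theta$ is a congruence, this reduces to the assertion that $x\cdot y \preceq_I z$ iff $x\preceq_I y\rightarrow z$, which is condition 3) of Definition \ref{def: residuated relational systems} for the induced pre-ordered residuated system, and this is exactly what Theorem \ref{th: dal quasi-lattice al pre-order system} provides. I expect the one delicate point to be the bookkeeping that residuation is genuinely independent of the choice of representatives; but since $\theta$ is a congruence on $\mathbf{A}$ (by Proposition \ref{prop: Clifford-McLean � una congruenza}) the preimage condition $x\preceq_I y\rightarrow z$ is manifestly $\theta$-invariant in each coordinate, so this reduces to a short verification rather than a real obstacle.
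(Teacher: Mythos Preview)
Your argument is correct and follows exactly the route the paper indicates: the paper does not spell out a proof of this corollary, but immediately before it remarks that the result ``follows from Proposition~\ref{prop: Clifford-McLean � una congruenza} and the above observation'' (the folklore fact that $A/\theta$ is a poset under $[a]_\theta\leq[b]_\theta$ iff $a\preceq b$), which is precisely the combination you unwind. Your additional invocation of Theorem~\ref{th: dal quasi-lattice al pre-order system} and Proposition~\ref{pro: aritmetica quasi-order}(a) to check the pomonoid and residuation axioms just makes explicit what the paper leaves implicit.
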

\noindent
We now claim that residuation, in the class of residuated quasi-directoids, can be expressed in terms of identities. 
The candidates to replace residuation are the following:
\begin{itemize}
\item[(a)] $ (x\rightarrow y)\cdot x\preceq y $;
\item[(b)] $ (x\cdot y)\rightarrow z \preceq x\rightarrow (y\rightarrow z) $;
\item[(c)] $ x \rightarrow ( y\rightarrow z)\preceq (x\cdot y)\rightarrow z $;
\item[(d)] $ x\rightarrow (x\sqcup y) \preceq  1 $;
\item[(e)] $ 1 \preceq x\rightarrow (x\sqcup y) $;
\item[(f)] $x\cdot z \preceq (x\sqcup y)\cdot z$.
\end{itemize}

\noindent
It is not difficult to notice that all the above conditions can be expressed by identities, by simply observing that $ x\preceq y $ is equivalent to $ x\sqcup y = y $, for each $ x,y\in A $. 
We can now show that the residuation condition for residuated quasi-directoids can be expressed using identities only. 
\begin{proposition}\label{prop: la residuazione si esprime con equazioni}
Let $ \mathbf{A}=\langle A, \cdot, \rightarrow, \sqcup, 1\rangle $ be an algebra of type $ \langle 2,2,2,0\rangle $ satisfying all the axioms in \emph{Definitions \ref{def: quasi-directoid}} and \emph{\ref{def:pre-ordered residuated quasi-lattices}} with the exception of condition \emph{g)}. Then $ \mathbf{A} $ satisfies axiom \emph{g)} if and only if it satisfies the identities \emph{(a), (b), (c), (d), (e), (f)} above.
\end{proposition}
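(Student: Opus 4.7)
The statement is a biconditional between the residuation quasi-identity g) and the package of inequalities (a)--(f). My plan handles the two directions separately; essentially all the real work lies in the reverse direction.

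For the forward direction (assume g), derive (a)--(f)), the hypothesis makes $\mathbf{A}$ a full residuated quasi-directoid, so by Theorem \ref{th: dal quasi-lattice al pre-order system} the induced relational system $\langle A,\cdot,\rightarrow,1,\preceq\rangle$, where $x\preceq y\Leftrightarrow x\sqcup y=y$, is a pre-ordered residuated system. Once inside that setting all the inequalities come cheaply: (a) is Proposition \ref{prop:aritmetica2}(b); (b) and (c) are Proposition \ref{pro: aritmetica quasi-order}(d) and (e); (f) is monotonicity of multiplication (Proposition \ref{pro: aritmetica quasi-order}(a)) applied to $x\preceq x\sqcup y$; (d) is just the fact $u\preceq 1$ for every $u$, i.e.\ axiom f); and (e) follows from $x\preceq x\sqcup y$ (Lemma \ref{lem: x sotto x,y}) by one application of residuation. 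All of this is bookkeeping.

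For the reverse direction (assume (a)--(f), derive g)), I can no longer appeal to residuation and must build the equivalence $x\cdot y\preceq z\Leftrightarrow x\preceq y\rightarrow z$ by hand. Two preliminary facts are needed. First, in any quasi-directoid $\preceq$ is already a preorder: reflexivity follows from axiom a), and transitivity by exactly the short computation displayed in the proof of Theorem \ref{th: dal quasi-lattice al pre-order system} (using c) twice). Second, (f) is literally monotonicity of $\cdot$, because $u\preceq v$ means $v=u\sqcup v$ and then $u\cdot w\preceq (u\sqcup v)\cdot w=v\cdot w$. Granted these, the easy direction $x\preceq y\rightarrow z\Rightarrow x\cdot y\preceq z$ is routine: multiply by $y$ to get $x\cdot y\preceq (y\rightarrow z)\cdot y$ and conclude via (a) and transitivity.

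The delicate direction is $x\cdot y\preceq z\Rightarrow x\preceq y\rightarrow z$, and here the main obstacle is finding a residuation-free substitute for the usual ``move $y$ across'' step. My plan is to first climb up to $1\preceq x\rightarrow(y\rightarrow z)$ and then multiply by $x$ to recover $x$ on the left. In detail: instantiate (e) at $x\cdot y,z$ to get $1\preceq (x\cdot y)\rightarrow((x\cdot y)\sqcup z)$, use the hypothesis $(x\cdot y)\sqcup z=z$ to collapse the right-hand side to $(x\cdot y)\rightarrow z$, and then apply (b) plus transitivity to reach $1\preceq x\rightarrow(y\rightarrow z)$. Multiplying by $x$ on the right (monotonicity from (f)) yields $x\preceq (x\rightarrow(y\rightarrow z))\cdot x$, and a single application of (a), read with $y$ replaced by $y\rightarrow z$, lands on $y\rightarrow z$; transitivity finishes. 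Note that (c) and (d) play no role in the reverse direction: they appear in the statement only because g) must also entail them.
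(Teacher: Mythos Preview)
Your argument is correct and largely follows the paper's own proof: the forward direction is handled identically (reduce to a pre-ordered residuated system via Theorem~\ref{th: dal quasi-lattice al pre-order system} and quote Propositions~\ref{prop:aritmetica2} and~\ref{pro: aritmetica quasi-order}), and for the implication $x\cdot y\preceq z\Rightarrow x\preceq y\rightarrow z$ your chain through (e), (b), monotonicity, and (a) is exactly the paper's argument.

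The one genuine difference is in the converse implication $x\preceq y\rightarrow z\Rightarrow x\cdot y\preceq z$. The paper again routes through (e) to obtain $1\preceq x\rightarrow(y\rightarrow z)$, then invokes (c) to pass to $x\cdot y\rightarrow z$, and finally multiplies by $x\cdot y$ and applies (a). You instead multiply the hypothesis directly by $y$ using monotonicity (f) and finish with a single application of (a), which is shorter and, as you observe, makes (c) unnecessary for the reverse direction. Your remark that (c) and (d) are redundant on that side is therefore accurate for your proof but not for the paper's; the paper does use (c), while (d) is indeed idle in both versions since it is already a consequence of axiom~f).
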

\proof
For the left to right direction, we just need to show that (a), (b), (c), (d), (e) and (f) hold in any residuated quasi-directoid. In order to get this we simply rely on the fact that (a), (b), (c) hold in any pre-ordered residuated system, by Propositions \ref{prop:aritmetica2} and \ref{pro: aritmetica quasi-order}. Furthermore, (d) is an instance of axiom f) in Definition \ref{def:pre-ordered residuated quasi-lattices}. As regards (e), $ 1\cdot x =x\preceq x\sqcup y $ by Definition \ref{def: quasi-directoid}, hence by residuation $ 1\preceq x\rightarrow (x\sqcup y) $. Finally, (f) follows from the monotonicity of multiplication and the fact that $x\preceq x\sqcup y$.\\
For the converse, we have to derive the residuation condition g) using equations (a), (b), (c), (d), (e) and (f). At first, we observe that (f) implies that multiplication preserves the induced pre-order. Suppose $ a\cdot b\preceq c $, then $ (a\cdot b)\sqcup c = c $. 
By (e), $ 1\preceq a\cdot b\rightarrow ((a\cdot b)\sqcup c) = a\cdot b\rightarrow c\preceq a\rightarrow (b\rightarrow c) $, by (b). Thus $ a=1\cdot a\preceq (a\rightarrow (b\rightarrow c))\cdot a\preceq b\rightarrow c $, by (a), hence $ a\preceq b\rightarrow c $ (in the first inequality we have used that $1\preceq a\to (b\rightarrow c$). \\
Suppose now that $ a\preceq b\rightarrow c $, i.e. $ a\sqcup (b\rightarrow c) = b\rightarrow c $. By (e) $ 1\preceq a\rightarrow (a\sqcup (b\rightarrow c))= a\rightarrow (b\rightarrow c)\preceq a\cdot b\rightarrow c $ by equation (c). Hence $ a\cdot b= 1\cdot (a\cdot b)\preceq (a\cdot b\rightarrow c)\cdot (a\cdot b)\preceq c $ by equation (a), thus $ a\cdot b\preceq c $ (in the first inequality we have used that $1\preceq a\cdot b\rightarrow c $). 
\endproof       

\begin{corollary}\label{cor: varietà}
The class of residuated quasi-directoids forms a variety. 
\end{corollary}

\section{Residuated systems with negation}\label{sec:3.3}

In what follows we expand the language of residuated relational systems, adding a new constant $ 0 $. 
\begin{definition}\label{def: sistema con 0}
A \emph{residuated relational system with 0} is a structure $ \alga = \langle A, \cdot, \rightarrow, 0,1, R \rangle $ such that $ \alga = \langle A, \cdot, \rightarrow,1, R \rangle $ is a residuated relational system and $0\in A$ is a constant such that $ 0\cdot x = x\cdot 0 = 0 $ for each $x\in A $.   
\end{definition}
In a residuated relational system with 0 it makes sense to define a new operation as $ x':= x\rightarrow 0 $. Such operation will be referred to as \emph{negation}. For sake of simplicity we will write $ x'' $ as an abbreviation for $ (x')' $. 
\begin{lemma}\label{lem: 0Rx}
Let $ \alga $ be a residuated relational system with 0, whose relation $R$ is reflexive. Then $ (0,y)\in R $ for each $ y\in A $.
\end{lemma}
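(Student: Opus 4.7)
The approach is to combine the absorbing property of $0$ (namely $x\cdot 0 = 0$ for every $x$) with reflexivity of $R$ through a single application of residuation. The key is that Proposition \ref{prop:aritmetica2}(b) is essentially a direct consequence of reflexivity plus residuation, and it contains exactly the building block we need.

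First I would recall that, by reflexivity of $R$, we have $(0\rightarrow y,\, 0\rightarrow y)\in R$ for every $y\in A$. Applying residuation (condition 3 of Definition \ref{def: residuated relational systems}) with $x:=0\rightarrow y$, $y:=0$, $z:=y$, this is equivalent to
\[
\bigl((0\rightarrow y)\cdot 0,\, y\bigr)\in R.
\]
Now the absorbing axiom from Definition \ref{def: sistema con 0} gives $(0\rightarrow y)\cdot 0 = 0$, so the displayed condition reads exactly $(0,y)\in R$, which is what we wanted. Equivalently, one can cite Proposition \ref{prop:aritmetica2}(b), namely $((x\rightarrow y)\cdot x,\, y)\in R$, and instantiate it at $x=0$.

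There is no real obstacle here: the proof is a one-liner once reflexivity and the absorbing property of $0$ are combined through residuation. The only thing to be a little careful about is that Proposition \ref{prop:aritmetica2} was proved under the assumption that $R$ is reflexive, and the hypothesis of the lemma states exactly this, so the invocation is legitimate.
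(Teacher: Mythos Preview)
Your proof is correct and follows essentially the same route as the paper: both invoke Proposition~\ref{prop:aritmetica2}(b), namely $((x\rightarrow y)\cdot x,\,y)\in R$, instantiate $x=0$, and use the absorbing property $(0\rightarrow y)\cdot 0=0$ from Definition~\ref{def: sistema con 0}. Your additional unfolding of Proposition~\ref{prop:aritmetica2}(b) from reflexivity and residuation is accurate but not strictly needed, since that proposition is already available under the reflexivity hypothesis.
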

\begin{proof}
From Proposition \ref{prop:aritmetica2} we have $ ((x\rightarrow y)\cdot x, y)\in R $. Setting $ x = 0 $ we get $ (0,y) = ((0\rightarrow y)\cdot 0, y)\in R $.
\end{proof}
Here are some basic facts concerning negation in residuated relational systems with 0. 
\begin{proposition}\label{prop: negazione}
Let $ \alga $ be a residuated relational system with 0 and $ R $ a reflexive relation. Then
\begin{itemize}
\item[(a)] $(0,1')\in R $ and $ (1',0)\in R $
\item[(b)] $ (1,0')\in R $ and $ (0',1)\in R $
\item[(c)] $ (x,x'')\in R $
\item[(d)] $ (x\cdot x',0)\in R $,
\end{itemize}
for all $x\in A$.
\end{proposition}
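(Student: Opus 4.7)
The plan is to prove each of the four items by specialising the basic arithmetic facts already collected in Propositions~\ref{prop: aritmetica}, \ref{prop:aritmetica2}, Lemma~\ref{lem: 0Rx}, and using residuation together with the fact that $0$ is absorbing for $\cdot$. Recall that by definition $x' = x\rightarrow 0$, so $1' = 1\rightarrow 0$ and $0' = 0\rightarrow 0$.

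For item (a), the inclusion $(0,1')\in R$ is a direct instance of Lemma~\ref{lem: 0Rx} applied to $y = 1'$. For the converse $(1',0)\in R$, I would start from the reflexive fact $(1',1')\in R$, i.e.\ $(1\to 0,\ 1\to 0)\in R$, and apply residuation: $(x,1\to 0)\in R$ iff $(x\cdot 1,0)\in R$. Setting $x=1'$ and using the monoid law $1'\cdot 1 = 1'$ yields $(1',0)\in R$.

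For item (b), $(0',1)\in R$ is an instance of condition~2) in Definition~\ref{def: residuated relational systems}. For $(1,0')\in R$, I would use residuation in the opposite direction: $(1,0\to 0)\in R$ iff $(1\cdot 0,0)\in R$, and the right-hand side is $(0,0)\in R$, which holds by reflexivity. Item (c) is immediate from Proposition~\ref{prop:aritmetica2}(e) by setting $y=0$: the resulting pair is $(x,(x\to 0)\to 0)=(x,x'')\in R$. Item (d) similarly follows from Proposition~\ref{prop:aritmetica2}(b) with $y=0$, giving $((x\to 0)\cdot x,0)=(x'\cdot x,0)\in R$; commutativity of $\cdot$ then delivers $(x\cdot x',0)\in R$.

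No step is a genuine obstacle — everything is a direct substitution into an already-proved statement or a single application of residuation. The only point deserving mild care is item (a), where one has to pick the right instance of residuation (applied to reflexivity of $R$ at the element $1'$) rather than trying to invoke Lemma~\ref{lem: 0Rx}, which only yields the $0$-on-the-left direction.
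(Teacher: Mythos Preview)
Your proof is correct and follows essentially the same route as the paper: items (c) and (d) are identical, and for (a) and (b) you reproduce by hand the residuation arguments that the paper shortcuts by citing Proposition~\ref{prop:aritmetica2}(d) (with $x=0$, yielding both halves of (a) at once) and Proposition~\ref{prop:aritmetica2}(a) (with $x=0$, for $(1,0')\in R$). The only economy you miss is that Proposition~\ref{prop:aritmetica2}(d) already packages both directions of (a), so Lemma~\ref{lem: 0Rx} and the separate residuation step are unnecessary.
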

\begin{proof}
(a) follows from Proposition \ref{prop:aritmetica2} (d) setting $ x = 0 $. \\
(b) By Proposition \ref{prop:aritmetica2} (a) (with $ x=0 $) we obtain $ (1,0') = (1, 0\rightarrow 0)\in R $. Clearly $ (0',1)\in R $ holds by the definition of a residuated relational system. \\
(c) a is direct consequence of Proposition \ref{prop:aritmetica2} (e) by setting $ y = 0 $. \\
(d) follows from Proposition \ref{prop:aritmetica2} (b), setting $y=0 $. 
\end{proof}
\begin{proposition}\label{prop: R antisimmetrica con 0}
If $ \alga $ is a residuated relational system with 0 and $ R $ reflexive and antisymmetric, then $ 0\rightarrow y = 1 $  for each $ y\in A $.
\end{proposition}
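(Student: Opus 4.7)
The plan is very short: this proposition follows immediately by combining two results already established in the paper. First, I would invoke Lemma \ref{lem: 0Rx}, which says that whenever $R$ is reflexive, $(0,y)\in R$ for every $y\in A$. This is the key ``arithmetic'' input, since it gives us the relational statement we want to convert into an equation.

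Next, I would apply Proposition \ref{prop: definitione equazioanle di R}(i), which requires exactly the hypothesis we have here, namely that $R$ is antisymmetric (reflexivity is not needed for that direction, but we already have it anyway). That proposition states that $(x,y)\in R$ if and only if $x\rightarrow y=1$. Specializing to $x=0$ and using the previous step, we conclude $0\rightarrow y=1$ for every $y\in A$.

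There is essentially no obstacle here: the work was already done in Lemma \ref{lem: 0Rx} (which pulled the fact $(0,y)\in R$ out of Proposition \ref{prop:aritmetica2}(b) by substituting $x=0$ and using the absorption property $0\cdot a=0$) and in Proposition \ref{prop: definitione equazioanle di R}(i) (which used residuation together with antisymmetry to match $x\to y=1$ with $(x,y)\in R$). The present proposition is just the composition of these two facts, so the proof is a two-line chain of implications rather than a new argument.
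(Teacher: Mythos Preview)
Your proposal is correct and matches the paper's own proof essentially line for line: the paper also invokes Lemma \ref{lem: 0Rx} to obtain $(0,y)\in R$ and then applies Proposition \ref{prop: definitione equazioanle di R} (part i) together with antisymmetry to conclude $0\rightarrow y = 1$.
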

\begin{proof}
By Lemma \ref{lem: 0Rx}, $ (0,y)\in R $ and since $ R $ is antisymmetric then, by Proposition \ref{prop: definitione equazioanle di R}, $ 0\rightarrow y = 1 $. 
\end{proof}
The above facts lead to the following
\begin{corollary}\label{cor: 0=1'}
Let $ \alga=\langle A, \cdot, \rightarrow, 0,1, R \rangle $ be a residuated relational system with $0$ and $ R $ a reflexive and antisymmetric relation. Then $ 0'=1 $, $1'=0$ and $ x\cdot x'=0 $. 
\end{corollary}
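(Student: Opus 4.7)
The corollary is a direct harvest from the three preceding results. The plan is to handle the three claims in turn, each time combining one of the inclusions already recorded in Proposition \ref{prop: negazione} or Lemma \ref{lem: 0Rx} with the antisymmetry of $R$ (via Proposition \ref{prop: R antisimmetrica con 0} or Proposition \ref{prop: definitione equazioanle di R}).

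First, for $0'=1$: by definition $0'=0\rightarrow 0$, and Proposition \ref{prop: R antisimmetrica con 0} (applied with $y=0$) gives $0\rightarrow 0 = 1$, so $0'=1$.

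Second, for $1'=0$: Proposition \ref{prop: negazione}(a) provides both $(0,1')\in R$ and $(1',0)\in R$, and antisymmetry of $R$ immediately forces $1'=0$.

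Third, for $x\cdot x'=0$: Proposition \ref{prop: negazione}(d) yields one inclusion $(x\cdot x',0)\in R$. For the converse direction, Lemma \ref{lem: 0Rx} (which used reflexivity of $R$) gives $(0,y)\in R$ for every $y\in A$, in particular $(0,x\cdot x')\in R$. Antisymmetry then closes the argument: $x\cdot x'=0$.

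There is no real obstacle here; the only thing to be careful about is that each identity requires \emph{both} hypotheses (reflexivity to quote Proposition \ref{prop: negazione} and Lemma \ref{lem: 0Rx}, antisymmetry to collapse the two $R$-inclusions into an equality). The proof is therefore essentially a three-line assembly.
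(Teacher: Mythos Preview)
Your proof is correct and matches the paper's approach: use the $R$-relations supplied by Proposition~\ref{prop: negazione} (and Lemma~\ref{lem: 0Rx} for the third item) together with antisymmetry to collapse each pair into an equality. The only cosmetic difference is that for $0'=1$ you invoke Proposition~\ref{prop: R antisimmetrica con 0} (which already packages the antisymmetry step), whereas the paper cites Proposition~\ref{prop: negazione}(b) directly and then applies antisymmetry; the content is identical.
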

\begin{proof}
The first two assertions follow from Proposition \ref{prop: negazione}, while the third follows from Lemma \ref{lem: 0Rx} and Proposition \ref{prop: negazione}.
\end{proof}
The following proposition states the properties of negation in pre-ordered residuated systems with $0$. These are residuated relational systems with 0 which are pre-ordered residuated systems, too.
\begin{proposition}\label{prop: negazione nei pre-ordinati}
Let $ \alga $ be a pre-ordered residuated system with 0. Then
\begin{itemize}
\item[(a)] if $ x\preceq y$ then $ y'\preceq x' $
\item[(b)] $ x\preceq x'' $, $x'\preceq x'''$, $ x'''\preceq x' $
\item[(c)] $ (x\cdot y)'\preceq x\rightarrow y' $ and $x\rightarrow y' \preceq (x\cdot y)' $
\item[(d)] $ x\rightarrow y'\preceq y\rightarrow x' $ and $ y\rightarrow x'\preceq x\rightarrow y' $
\item[(e)] $ (x\rightarrow y)\cdot y'\preceq x' $
\item[(f)] $ x\rightarrow y\preceq y'\rightarrow x' $
\end{itemize}
\end{proposition}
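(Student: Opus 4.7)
The strategy is to reduce every item to an instance of Proposition \ref{pro: aritmetica quasi-order} (and occasionally Proposition \ref{prop:aritmetica2}) by specializing the third variable to $0$ and unfolding the abbreviation $x' = x\rightarrow 0$. I would not introduce any new machinery; every clause follows by one or two of the previously established inequalities.

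For (a), $x\preceq y$ together with the order-reversing law in the first argument of $\rightarrow$ from Proposition \ref{pro: aritmetica quasi-order}(b) gives $y\rightarrow 0\preceq x\rightarrow 0$, which is precisely $y'\preceq x'$. For (b), the inequality $x\preceq (x\rightarrow y)\rightarrow y$ from Proposition \ref{prop:aritmetica2}(e), instantiated at $y=0$, yields $x\preceq x''$; applying this to $x'$ in place of $x$ gives $x'\preceq x'''$, while applying (a) to $x\preceq x''$ gives $(x'')'\preceq x'$, i.e.\ $x'''\preceq x'$. Parts (c) and (d) come from the currying/uncurrying inequalities of Proposition \ref{pro: aritmetica quasi-order}(d)-(e): setting $z=0$ transforms $x\cdot y\rightarrow z\preceq x\rightarrow(y\rightarrow z)$ into $(x\cdot y)'\preceq x\rightarrow y'$, and likewise for the converse, which handles (c); then (d) follows by chaining (c) with commutativity of $\cdot$, since $(x\cdot y)' = (y\cdot x)'$ sits between $x\rightarrow y'$ and $y\rightarrow x'$.

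The last two items are also single-line consequences. For (e), Proposition \ref{pro: aritmetica quasi-order}(g) gives $(x\rightarrow y)\cdot (y\rightarrow z)\preceq x\rightarrow z$; instantiating $z:=0$ turns this directly into $(x\rightarrow y)\cdot y'\preceq x'$. For (f), Proposition \ref{pro: aritmetica quasi-order}(i) gives $x\rightarrow y\preceq (y\rightarrow z)\rightarrow (x\rightarrow z)$; again $z:=0$ rewrites the right-hand side as $y'\rightarrow x'$, finishing the argument.

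There is essentially no obstacle: the entire proposition is a library of corollaries obtained by substituting $0$ for the third variable in the general identities proved earlier, combined with the definition $x' := x\rightarrow 0$. The only mild care is in (b), where one must apply (a) once to derive $x'''\preceq x'$ rather than trying to prove it directly, and in (d), where the symmetric half is recovered by swapping $x$ and $y$ in (c) and using commutativity of multiplication.
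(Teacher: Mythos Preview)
Your proposal is correct and essentially follows the paper's proof: each item is obtained by specializing the corresponding clause of Proposition~\ref{pro: aritmetica quasi-order} (or Proposition~\ref{prop:aritmetica2}) at $z=0$, with (b) handled exactly as in the paper. The only cosmetic difference is (d): the paper invokes Proposition~\ref{pro: aritmetica quasi-order}(f) directly with $z=0$, whereas you chain (c) with commutativity of $\cdot$ and transitivity---but since the paper's own proof of \ref{pro: aritmetica quasi-order}(f) is precisely that chaining, the arguments are in fact identical.
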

\begin{proof}
(a) follows from Proposition \ref{pro: aritmetica quasi-order} (b), upon setting $ z = 0 $. \\
(b) $ x\preceq x'' $ is a consequence of Proposition \ref{prop:aritmetica2} (e) setting $ y = 0 $. $x'\preceq x'''$ is obtained from $x\preceq x''$ by substituting $ x $ by $ x' $, while $x'''\preceq x'$ follows from $x\preceq x''$ by (a).  \\
(c) follows from Proposition \ref{pro: aritmetica quasi-order} (d) and (e) with $ z = 0 $. \\
(d) follows from Proposition \ref{pro: aritmetica quasi-order} (f) by setting $ z = 0 $. \\
(e) follows from Proposition \ref{pro: aritmetica quasi-order} (g) by setting $ z = 0 $. \\
(f) follows from Proposition \ref{pro: aritmetica quasi-order} (i) by setting $ z = 0 $.
\end{proof}
\noindent
One can observe that the properties of negation listed in Proposition \ref{prop: negazione nei pre-ordinati} correspond to those of negation in intuitionistic logic.
\begin{definition}\label{def: doppia negazione}
Let $ \alga $ be a pre-ordered residuated system with 0. We say that $ \alga $ satisfies the \emph{law of double negation} if $ x=x'' $, for each $ x\in A $, i.e. if $ x= (x\rightarrow 0)\rightarrow 0 $. 
\end{definition}

\begin{proposition}\label{prop: con la doppia negazione}
Let $ \alga $ be a pre-ordered residuated system with $0$ satisfying the law of double negation. Then
\begin{itemize}
\item[(i)] $ x\rightarrow y\preceq (x\cdot y')' $ and $(x\cdot y')'\preceq x\rightarrow y $.
\item[(ii)] $ y'\rightarrow x'\preceq x\rightarrow y $.
\end{itemize}
\end{proposition}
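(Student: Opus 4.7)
The plan is to obtain both statements as easy consequences of the corresponding parts of Proposition \ref{prop: negazione nei pre-ordinati}, using the law of double negation to eliminate the double-prime wherever it appears.

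For part (i), I would start from Proposition \ref{prop: negazione nei pre-ordinati} (c), which gives, for all $u,v\in A$,
\[
(u\cdot v)'\preceq u\rightarrow v'\quad\text{and}\quad u\rightarrow v'\preceq (u\cdot v)'.
\]
Applying this with $u=x$ and $v=y'$, I get
\[
(x\cdot y')'\preceq x\rightarrow y''\quad\text{and}\quad x\rightarrow y''\preceq (x\cdot y')'.
\]
The law of double negation gives $y''=y$, so substituting yields exactly $(x\cdot y')'\preceq x\rightarrow y$ and $x\rightarrow y\preceq (x\cdot y')'$, which is (i).

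For part (ii), I would invoke Proposition \ref{prop: negazione nei pre-ordinati} (f): for all $u,v\in A$, $u\rightarrow v\preceq v'\rightarrow u'$. Specializing this with $u:=y'$ and $v:=x'$ gives
\[
y'\rightarrow x'\preceq x''\rightarrow y''.
\]
Finally, using $x''=x$ and $y''=y$ from the law of double negation, the right-hand side becomes $x\rightarrow y$, yielding (ii).

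There is no real obstacle here: the content is entirely in Proposition \ref{prop: negazione nei pre-ordinati} (c) and (f), and the only new ingredient is the substitution $x''=x$, $y''=y$, which is precisely what Definition \ref{def: doppia negazione} permits. The mild point to check is that in a pre-ordered residuated system the relation $\preceq$ is preserved under substitution of variables (which is immediate, since the inequalities in Proposition \ref{prop: negazione nei pre-ordinati} hold for all elements of $A$), so the variable renamings above are legitimate.
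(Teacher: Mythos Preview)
Your proof is correct and follows essentially the same line as the paper's. For part (ii) it is identical; for part (i) the paper appeals directly to Proposition~\ref{pro: aritmetica quasi-order} (d) and (e) with $z=0$, whereas you cite the already-derived Proposition~\ref{prop: negazione nei pre-ordinati} (c), but since the latter is obtained precisely by that specialization, the arguments are the same in substance.
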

\begin{proof}
(i) Using the law of double negation and condition (e) in Proposition \ref{pro: aritmetica quasi-order}, we have $ x\rightarrow y= x\rightarrow ((y\rightarrow 0)\rightarrow 0)\preceq x\cdot (y\rightarrow 0)\rightarrow 0 = (x\cdot y')' $. Similarly for the other claim we use the law of double negation and condition (d) in Proposition \ref{pro: aritmetica quasi-order}, obtaining $(x\cdot y')'= x\cdot (y\rightarrow 0)\rightarrow 0 \preceq x\rightarrow ((y\rightarrow 0)\rightarrow 0) = x\rightarrow y $. \\
(ii) The result follows from Proposition \ref{prop: negazione nei pre-ordinati} (f), where, using the law of double negation, we get: $ y'\rightarrow x'\preceq x''\rightarrow y'' = x\rightarrow y $. 

\end{proof}

\begin{center}
\textbf{Acknowledgments}
\end{center}

The work of the first author is supported by the Italian Ministry of Scientific Research (MIUR) for the support within the PRIN project `Theory of Rationality: logical, epistemological and computational aspects'. The research of the second author is supported by the project IGA PrF 2014016 Palacky University Olomouc and by the Austrian Science Fund (FWF), project I 1923-N25, and the Czech Science Foundation (GA\v{C}R): project 15-34697L. Finally, we also thank Francesco Paoli for his suggestion to work on the topic and an anonymous referee for his/her valuable comments on a previous draft.  


\end{document}